\theoremstyle{plain}
\newtheorem{prop}{Proposition}
\title{{\bf A simple counterexample to\\Havil's ``reformulation''\\of the Riemann Hypothesis}}
\author{Jonathan Sondow}
\date{}
\begin{document}
\maketitle
\begin{abstract}
The Riemann Hypothesis (RH) is the most famous unsolved problem in mathematics. It is an assertion about the Riemann zeta function which, if true, would provide an optimal form of the Prime Number Theorem. In his book {\it Gamma: Exploring Euler's Constant}, J.~Havil makes a conjecture which he claims is ``a tantalizingly simple reformulation'' of the RH. We first explain the RH, Havil's conjecture, and the connection between them. Then we give a simple example which we show is a counterexample to Havil's conjecture, but not to the~RH. Finally, we prove that a weakened form of Havil's conjecture is a true reformulation of the RH.
\end{abstract}

\vspace{1em}
{
\setlength{\parindent}{64pt}
\small
Jonathan Sondow\par 209 West 97th Street\par New York, NY 10025, USA\par email: {\tt jsondow@alumni.princeton.edu} }

\section{Introduction} \label{SEC: introd}

The Riemann Hypothesis (RH) is the most famous unsolved problem in mathematics. It is an assertion about the Riemann zeta function which, if true, would provide an optimal form of the Prime Number Theorem.

In his book {\it Gamma: Exploring Euler's Constant}, J.~Havil claims that the following conjecture is ``a tantalizingly simple reformulation'' of the RH.\\

\noindent\textbf{Havil's Conjecture.} {\it If
\begin{equation*} 
	\sum_{n=1}^\infty \frac{(-1)^n}{n^a} \cos(b\ln n)=0 \quad { and}  \quad \sum_{n=1}^\infty \frac{(-1)^n}{n^a} \sin(b\ln n)=0 
\end{equation*}
for some pair of real numbers $a$ and $b$, then $a=1/2$.}\\

We first explain the RH and its connection with Havil's conjecture. Then we show that the pair of real numbers $a=1$ and \mbox{$b=2\pi/\!\ln2$} is a counterexample to Havil's Conjecture, but not to the~RH. Finally, we prove that Havil's Conjecture becomes a true reformulation of the RH if his conclusion ``\emph{then} $a=1/2$'' is weakened to ``\emph{then $a=1/2$ or} $a=1$.''

\section{The Riemann Hypothesis} \label{SEC: rh}

In $1859$ Riemann published a short paper \emph{On the number of primes less than a given quantity}~\cite{riemann}, his only one on number theory. Writing $s$ for a complex variable, at first he assumes that its real part $\Re(s)$ is greater than~$1$, and he begins with \emph{Euler's product-sum formula}
\begin{equation*}
	 \prod_p \frac{1}{1-\dfrac{1}{p^s}}=\sum_{n=1}^\infty \frac{1}{n^s}  \qquad (\Re(s)>1). 
\end{equation*}
Here the product is over all primes $p$. To see the equality, expand each factor $1/(1-1/p^s)$ in a geometric series, multiply them together, and use unique prime factorization.

Euler proved his formula only for real $s\ (>1)$. He used it to give a new proof of Euclid's theorem on the infinitude of the primes: if there were only finitely many, then as $s\to1^+$ the left-hand side of the formula would approach a finite number while the right-hand side approaches the harmonic series $\sum1/n = \infty$. Going further than Euclid, Euler also used his formula to show that, unlike the squares, the primes are so close together that
$$ \frac12+\frac13+\frac15+\frac17+\frac{1}{11}+\frac{1}{13}+\frac{1}{17}+\frac{1}{19}+\frac{1}{23}+\frac{1}{29}+\frac{1}{31}+\frac{1}{37}+\cdots = \infty.$$

Aiming to go even further, Riemann develops the properties of the \emph{Riemann zeta function} $\zeta(s)$. He defines it first as the sum of the series in Euler's formula,
\begin{equation}
	\zeta(s)=\sum_{n=1}^\infty \frac{1}{n^s} = \frac{1}{1^s}+\frac{1}{2^s}+\frac{1}{3^s}+\cdots \qquad (\Re(s)>1), \label{EQ: zeta}
\end{equation}
which converges by comparison with the $p$-series (here $p=\Re(s)$)
$$\sum\left|\frac{1}{n^s}\right|=\sum\frac{1}{n^{\Re(s)}}.$$

Using other formulas for $\zeta(s)$, Riemann extends its definition to the whole complex plane, where it has one singularity, a~simple pole at $s=1$, reflecting Euler's observation that $\zeta(s)\to\sum1/n = \infty$ as $s\to1^+$.

Riemann analyzes the \emph{zeros of} $\zeta(s)$, which he calls the roots of the equation \mbox{$\zeta(s)=0$}. He shows that there are none in the right half-plane \mbox{$\{s:\Re(s)>1\}$}, and that the only ones in the left half-plane \mbox{$\{s:\Re(s)<0\}$} are the negative even integers \mbox{$s=-2,-4,-6,\ldots$}. (These real zeros had been found by Euler more than a century earlier---see \cite{ayoub}.)

Turning his attention to the zeros in the closed strip \mbox{$\{s:0\le\Re(s)\le1\}$}, Riemann proves that they are symmetrically located about the vertical line $\Re(s)=1/2$. Using an integral, he estimates the number of them with imaginary part between $0$ and some bound $T$. Then he says:

\def\changemargin#1#2{\list{}{\rightmargin#2\leftmargin#1}\item[]}
\let\endchangemargin=\endlist 
\begin{changemargin}{0in}{0in}
\begin{quotation}
\noindent One finds in fact about this many [on the line] within these bounds and it is very likely that all of the [zeros in the strip are on the line]. One would of course like to have a rigorous proof of this; however, I have tentatively put aside the search for such a proof after some fleeting vain attempts $\ldots$.
\end{quotation}
\end{changemargin}
Thus was born the now famous and still unproven RH.\\

\noindent\textbf{The Riemann Hypothesis.} {\it If $\zeta(s)=0$ and $s\neq-2,-4,-6,\ldots$, then \mbox{$\Re(s)=1/2$}.}\\

Around 1896 Hadamard and de la Vall\'ee Poussin, independently, took a step in the direction of the RH by proving that \mbox{$\zeta(s)\neq0$} on the line \mbox{$\Re(s)=1$}. This was a crucial ingredient in their proofs of the \emph{Prime Number Theorem} (PNT), which estimates $\pi(x)$, \emph{the number of primes $p$ with} $2\le p\le x$. Conjectured by Gauss in 1792 at the age of 15, the PNT says that
$$\pi(x) \sim \frac{x}{\ln x}.$$
That is, as $x\to\infty$ the limit of the quotient
$$\frac{\pi(x)}{x/\!\ln x}$$
exists and equals~$1$. More accurately, Gauss guessed that $\pi(x)\sim \text{Li}(x)$, where
$$\text{Li}(x) = \int_2^x \frac{dt}{t}$$
is the \emph{logarithmic integral}, which is equal to $x/\!\ln x$ plus a smaller quantity. If the RH is true, then the PNT's estimate $\pi(x) \sim \text{Li}(x)$ comes with a good bound on the error $\pi(x) - \text{Li}(x)$. Namely, the RH implies that the inequality
$$\left| \pi(x) - \text{Li}(x) \right| < Cx^{\frac12+\epsilon}$$
holds for any $\epsilon > 0$ and all $x\ge2$, where $C$ is a positive number that depends on $\epsilon$ but not on $x$. In fact, this inequality is \emph{equivalent} to the RH, and the appearance of $1/2$ in both is not a coincidence.

Since \mbox{$\zeta(s)\neq0$} whenever \mbox{$\Re(s)=1$}, the symmetry of the zeros in the strip \mbox{$\{s:0\le\Re(s)\le1\}$} implies that they lie in the \emph{open} strip $\{s:0<\Re(s)<1\}$. In 1914 Hardy proved that infinitely many of them lie on the line $\Re(s)=1/2$. That lends credence to the RH, but of course does not prove it.

Hardy's result was later improved by Selberg and others, who showed that a positive percentage of the zeros in the strip lie on its center line. Using computers and further theory, the first $10^{13}$ zeros have been shown to lie on the line. For more on the RH, the PNT, and their historical background, see \cite{bcrw}, \cite{sautoy}, and \cite[Chapter~16]{havil}.

In order to relate the RH to Havil's Conjecture, we need to introduce a function closely related to $\zeta(s)$.

\section{The Alternating Zeta Function} \label{SEC: alt zeta}

\noindent For $s$ with positive real part, the \emph{alternating zeta function} $\zeta_*(s)$ (also known as the \emph{Dirichlet eta function} $\eta(s)$) is defined as the sum of the alternating series
\begin{equation*} 
	\zeta_*(s)=\sum_{n=1}^\infty \frac{(-1)^{n-1}}{n^s} = \frac{1}{1^s}-\frac{1}{2^s}+\frac{1}{3^s}-\frac{1}{4^s}+\frac{1}{5^s}-\frac{1}{6^s}+\cdots \quad (\Re(s)>0), 
\end{equation*}
which we now show converges.

Since $\Re(s)>0$, the $n$th term approaches $0$ as $n\to\infty$, so that we only need to show convergence for the series formed by grouping the terms in odd-even pairs. Writing each pair as an integral
$$\frac{1}{n^s} - \frac{1}{(n+1)^s} =s \int_n^{n+1} \frac{dx}{x^{s+1}} $$
with $n$ odd, we set $\sigma=\Re(s)$ and bound the integral:

$$\left| \int_n^{n+1} \frac{dx}{x^{s+1}}\right| \le \int_n^{n+1} \frac{dx}{\left|x^{s+1}\right|} = \int_n^{n+1}\frac{dx}{x^{\sigma+1}} < \frac{1}{n^{\sigma+1}}.$$
As the series $\sum n^{-\sigma-1}$ converges, so does the alternating series for $\zeta_*(s)$.

When $\Re(s)>1$ the alternating series converges absolutely, and so we may rewrite it as the difference
\begin{eqnarray*}
\zeta_*(s)     &=& \frac{1}{1^s}+\frac{1}{2^s}+\frac{1}{3^s}+\frac{1}{4^s}+\frac{1}{5^s}+\frac{1}{6^s}+\cdots - 2\left(\frac{1}{2^s}+\frac{1}{4^s}+\frac{1}{6^s}+\cdots\right) \\
                      &=& \frac{1}{1^s}+\frac{1}{2^s}+\frac{1}{3^s}+\frac{1}{4^s}+\frac{1}{5^s}+\frac{1}{6^s}+\cdots - \frac{2}{2^s} \left(\frac{1}{1^s}+\frac{1}{2^s}+\frac{1}{3^s}+\cdots\right)\\
                        &=& \zeta(s) - \frac{2}{2^s}\zeta(s).
\end{eqnarray*}
Thus the alternating zeta function is related to the Riemann zeta function by the simple formula
\begin{equation}
	\zeta_*(s) = \left(1-2^{1-s}\right) \zeta(s). \label{EQ: 2zetas}
\end{equation}
We derived it for $s$ with $\Re(s)>1$, but a theorem in complex analysis guarantees that the formula then remains valid over the whole complex plane.

At the point $s=1$, the simple pole of $\zeta(s)$ is cancelled by a zero of the factor $1-2^{1-s}$. This agrees with the fact that $\zeta_*(s)$ is finite at $s=1$. Indeed, $\zeta_*(1)$ is equal to Mercator's alternating harmonic series$$\zeta_*(1)=1-\frac12+\frac13-\frac14+\frac15-\frac16+\cdots=\ln2.$$

The product formula \eqref{EQ: 2zetas} shows that $\zeta_*(s)$ vanishes at each zero of the factor \mbox{$1-2^{1-s}$} with the exception of $s=1$. (This can also be proved without using~\eqref{EQ: 2zetas}---an elementary proof is given in~\cite{sondow}.) It is a nice exercise to show that the zeros of $1-2^{1-s}$ lie on the line $\Re(s)=1$, and occur at the points $s_k$ given by
$$s_k=1+i\frac{2\pi k}{\ln2} \qquad (k=0,\pm1,\pm2,\pm3,\ldots).$$
Thus $s_k$ is also a zero of $\zeta_*(s)$ if $k\neq0$.

Since $1-2^{1-s}\neq0$ when $\Re(s)\neq1$, relation~\eqref{EQ: 2zetas} also shows that $\zeta_*(s)$ and $\zeta(s)$ have the same zeros in the strip $\{s:0<\Re(s)<1\}$. The first one is
$$\rho_1 = 0.5+14.1347251417346937904572519835624702707842571156992\ldots i,$$
the Greek letter $\rho$ (rho) standing for root. Using a calculator, the reader can see it is likely that $\zeta_*(\rho_1)=0$. But be patient: at $s=\rho_1$ the alternating series for $\zeta_*(s)$ converges very slowly, because its $n$th term has modulus $\left|(-1)^{n-1}n^{-\rho_1}\right|$ $=n^{-1/2}$. For example, to get $n^{-1/2}<0.1$, you need $n>100$.

If we substitute the series for $\zeta_*(s)$ into equation~\eqref{EQ: 2zetas} and solve for $\zeta(s)$, we obtain the formula
\begin{equation}
	\zeta(s) = \frac{1}{1-2^{1-s}}\sum_{n=1}^\infty \frac{(-1)^{n-1}}{n^s} \qquad (\Re(s)>0,\ s\neq1). \label{EQ: explicit}
\end{equation}
Since the series converges whenever $\Re(s)>0$, the right-hand side makes sense for all $s\neq1$ with positive real part, the first factor's poles at $s=s_k\neq1$ being cancelled by zeros of the second factor. Thus the formula extends the definition \eqref{EQ: zeta} of $\zeta(s)$ to a larger domain.

We can now explain the relation between the RH and Havil's Conjecture.

\section{A Counterexample to Havil's Conjecture} \label{SEC: counterex}

\noindent Let's write $s=a+ib$, where $a$~and~$b$ are real numbers. Euler's famous formula
$$e^{ix} = \cos x + i\sin x$$
shows that
\begin{equation*}
\frac{1}{n^s} = \frac{1}{n^{a+ib}} = \frac{1}{n^a} e^{-ib\ln n} = \frac{1}{n^a} \left(\cos(b\ln n)-i\sin(b\ln n)\right). 
\end{equation*}
Now if $a=\Re(s)>0$, then
\begin{eqnarray}
	\zeta_*(s)=\sum_{n=1}^\infty \frac{(-1)^{n-1}}{n^s} \!\!&=&\!\!\sum_{n=1}^\infty \frac{(-1)^{n-1}}{n^a} \left(\cos(b\ln n)-i\sin(b\ln n)\right) \label{EQ: 2series}\\ 
										     \!\!&=&\!\!-\sum_{n=1}^\infty \frac{(-1)^n}{n^a} \cos(b\ln n) + i\sum_{n=1}^\infty \frac{(-1)^n}{n^a}\sin(b\ln n).\notag
\end{eqnarray}

Up to sign, the last two series are the real and imaginary parts of $\zeta_*(s)$. Hence $\zeta_*(s)=0$ if and only if both series vanish. Since they are the same series as in Havil's Conjecture and $\zeta_*(s)=0$ at $s=s_1=1+2\pi i/\!\ln2$, \emph{the pair of real numbers $a=1$ and $b=2\pi/\!\ln2$ is a counterexample to Havil's Conjecture.}

On the other hand, since the theorem of Hadamard and de la Vall\'ee Poussin says that $\zeta(s)$ has no zeros with real part equal to~$1$, \emph{the point \mbox{$s_1=1+2\pi i/\!\ln2$} is not a counterexample to the RH.} Therefore, Havil's Conjecture is \emph{not} a reformulation of the RH.

From looking at the two series in his conjecture it is not at all clear that they are equal to $0$ when $a=1$ and $b=2\pi/\!\ln2$. This shows the power of the alternate formulation $\zeta_*(s_1)=0$. 

To conclude, we give a \emph{true} reformulation of the RH.

\section{The RH Without Tears} \label{SEC: tears}

\noindent  Here is a corrected version of Havil's Conjecture.\\

\noindent\textbf{New Conjecture.} {\it If
\begin{equation} \label{EQ: reform2}
	\sum_{n=1}^\infty \frac{(-1)^n}{n^a} \cos(b\ln n)=0 \quad and  \quad \sum_{n=1}^\infty \frac{(-1)^n}{n^a} \sin(b\ln n)=0 
\end{equation}
for some pair of real numbers $a$ and $b$, then $a=1/2$ or $a=1$}.\\

Let's show that this is indeed a reformulation of the RH.

\begin{prop} \label{PROP: new conj}
The New Conjecture is true if and only if the RH is true.
\end{prop}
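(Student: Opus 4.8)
The plan is to recast the hypothesis of the New Conjecture as the single equation $\zeta_*(a+ib)=0$ and then catalogue all zeros of $\zeta_*$ in the right half-plane. By the computation in \eqref{EQ: 2series}, for $a>0$ the two series are, up to sign, the real and imaginary parts of $\zeta_*(a+ib)$, so the hypothesis that both series vanish is equivalent to $\zeta_*(a+ib)=0$. Moreover the series converge only when $a>0$: for $a\le0$ the $n$th term fails to tend to $0$, so the series cannot sum to $0$. Thus the hypothesis already confines us to the half-plane $\Re(s)>0$, exactly where relation \eqref{EQ: 2zetas} and the preceding analysis apply, and the New Conjecture asserts precisely that every zero $s$ of $\zeta_*$ with $\Re(s)>0$ satisfies $\Re(s)=1/2$ or $\Re(s)=1$.

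Next I would locate these zeros using the factorization $\zeta_*(s)=(1-2^{1-s})\zeta(s)$, splitting $\Re(s)>0$ into three regions. For $\Re(s)>1$ neither factor vanishes, since $\zeta(s)\neq0$ there and the zeros $s_k$ of $1-2^{1-s}$ all lie on $\Re(s)=1$; hence $\zeta_*$ has no zeros. For $0<\Re(s)<1$ the factor $1-2^{1-s}$ is nonzero, so the zeros of $\zeta_*$ coincide with the zeros of $\zeta$ in the open strip. On the line $\Re(s)=1$ the only zeros of $1-2^{1-s}$ are the points $s_k$ with $k\neq0$, the candidate at $s_0=1$ being cancelled by the pole of $\zeta$, while $\zeta$ itself has no zeros there by the theorem of Hadamard and de la Vallée Poussin. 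Hence the complete list of zeros of $\zeta_*$ with $\Re(s)>0$ is: the points $s_k$ for $k\neq0$, all having $\Re(s)=1$, together with the nontrivial zeros of $\zeta$, which by the results of Section~\ref{SEC: rh} all lie in the open strip $0<\Re(s)<1$.

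With this catalogue in hand the equivalence follows in both directions. If the RH holds, every nontrivial zero of $\zeta$ has real part $1/2$, so every zero of $\zeta_*$ with $\Re(s)>0$ has real part $1/2$ (the strip zeros) or $1$ (the points $s_k$), and the New Conjecture holds. Conversely, suppose the New Conjecture holds and let $\rho$ be a nontrivial zero of $\zeta$. Then $0<\Re(\rho)<1$, so $\rho$ is a zero of $\zeta_*$ with $\Re(\rho)>0$, and the New Conjecture forces $\Re(\rho)\in\{1/2,1\}$; since $\Re(\rho)<1$, we conclude $\Re(\rho)=1/2$, which is the RH.

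The step requiring the most care is the analysis on the line $\Re(s)=1$: one must invoke Hadamard--de la Vallée Poussin to be certain that the \emph{only} zeros of $\zeta_*$ there are the ``spurious'' ones $s_k$ contributed by the factor $1-2^{1-s}$, and must use the pole--zero cancellation that removes $s_0=1$. This is exactly the feature that both dooms Havil's original conjecture and is repaired by the added clause ``or $a=1$'': the escape clause accounts for the family $s_k$ and for nothing else, so it excludes no genuine information about the zeros of $\zeta$.
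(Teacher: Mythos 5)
Your proposal is correct and takes essentially the same route as the paper: the identification of the two series with the real and imaginary parts of $\zeta_*(a+ib)$, the convergence argument forcing $a>0$, the factorization \eqref{EQ: 2zetas}, Riemann's zero-free regions, and the Hadamard--de la Vall\'ee Poussin theorem are exactly the ingredients of the paper's own proof. The only difference is organizational---you first compile a complete catalogue of the zeros of $\zeta_*$ in the half-plane $\Re(s)>0$ and then read off both implications, whereas the paper argues the two directions directly, but the underlying case analysis (zero of $\zeta$ versus zero of $1-2^{1-s}$) is identical.
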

\begin{proof}
Suppose that the New Conjecture is true. Assume that $\zeta(s)=0$ and that \mbox{$s\neq-2,-4,-6,\ldots$}. By Riemann's results and the Hadamard-de la Vall\'ee Poussin theorem, $s$~lies in the open strip $\{s:0<\Re(s)<1\}$. Then relation~\eqref{EQ: 2zetas} gives $\zeta_*(s)=0$. Writing $s=a+ib$, equation~\eqref{EQ: 2series} yields the equalities in~\eqref{EQ: reform2}, and so by the New Conjecture $a=1/2$ or $a=1$. But $a=\Re(s)$ and $\Re(s)<1$. Hence $\Re(s)=1/2$. Thus the New Conjecture implies the RH. 

Conversely, suppose the RH is true. Assume $a$ and $b$ satisfy condition~\eqref{EQ: reform2}. In particular, both series in \eqref{EQ: reform2} converge, and so their $n$th terms tend to $0$ as $n\to\infty$. It follows that the sum of the squares of the $n$th terms, namely, $n^{-2a}$, also tends to $0$. Hence $a>0$. Then with $s=a+ib$ equation~\eqref{EQ: 2series} applies, and \eqref{EQ: reform2}~yields $\zeta_*(s)=0$. Now relation~\eqref{EQ: 2zetas} shows that $s$~is a zero of~$\zeta(s)$ or of~$1-2^{1-s}$. In the first case the RH says $a=1/2$, and in the second case we know $a=1$. Thus the RH implies the New Conjecture.
\end{proof}

\paragraph{Acknowledgments.} I am grateful to Jacques G\'elinas and Steven J. Miller for valuable suggestions and corrections, and to Jussi Pahikkala for helpful comments that led to improvements in the presentation.


\end{document}